\theoremstyle{plain}
\newtheorem{thm}{Theorem}[section]
\theoremstyle{definition}
\newtheorem{rem}[thm]{Remark}
\newcommand{\R}{\mathbb{R}}
\newcommand{\M}{\mathcal{M}}
\newcommand{\A}{\mathcal{A}}
\newcommand{\J}{\mathcal{J}}
\newcommand{\I}{\mathcal{I}}
\newcommand{\dive}{\operatorname{div}}
\newcommand{\BV}{\operatorname{BV}}
\newcommand{\vep}{\varepsilon}
\newcommand{\Lip}{\operatorname{Lip}}
\numberwithin{equation}{section}
\author[L. A. Caffarelli]{Luis A. Caffarelli}
\address{Department of Mathematics\\
The University of Texas at Austin\\
2515 Speedway, Austin\\
TX 78712, United States of America}
\email{caffarel@math.utexas.edu}
\author[P. R. Stinga]{Pablo Ra\'ul Stinga}
\address{Department of Mathematics\\
Iowa State University\\
396 Carver Hall, Ames\\
IA 50011, United States of America}
\email{stinga@iastate.edu}
\author[H. Vivas]{Hern\'an Vivas}
\address{Centro Marplatense de Investigaciones Matem\'aticas/CONICET\\
Dean Funes 3350\\
7600 Mar del Plata, Argentina}
\email{havivas@mdp.edu.ar}
\thanks{Research partially supported by NSF grant 1500871 (USA), Simons Foundation grant 580911 (USA), and
Agencia Nacional de Promoci\'on Cient\'ifica y Tecnol\'ogica grant PICT 2019-3530 (Argentina).}
\keywords{Mean curvature variation, prescribed mean curvature equation, computer-aided design, bending energy, existence and regularity}
\subjclass[2010]{Primary: 35B65, 49Q10, 65D17. Secondary: 35Q74, 53A10}
\begin{document}

\title[Surfaces of minimum mean curvature variation]{A PDE approach to the existence and regularity
of surfaces of minimum mean curvature variation}

\begin{abstract}
We develop an analytic theory of existence and regularity of surfaces 
(given by graphs) arising from the geometric minimization problem
\[
\min_\M\frac{1}{2}\int_\M|\nabla_{\M}H|^2\,dA
\]
where $\M$ ranges over all $n$-dimensional manifolds in $\R^{n+1}$ with prescribed boundary, $\nabla_{\M}H$ is the 
tangential gradient along $\M$ of the mean curvature $H$ of $\M$ and $dA$ is the differential of surface area.
The minimizers, called surfaces of minimum mean curvature variation,
are central in applications of computer-aided design, computer-aided manufacturing and mechanics.
Our main results show the existence of both smooth surfaces and
of variational solutions to the minimization problem together with geometric regularity results in the case of graphs.
These are the first analytic results available on the literature for this problem.
\end{abstract}

\maketitle

\section{Introduction}

Our aim is to develop an analytic theory of existence and regularity of surfaces of minimum mean curvature variation,
that is, surfaces arising from the geometric minimization problem
\begin{equation}\label{eq.min}
\min_\M\frac{1}{2}\int_\M|\nabla_{\M}H|^2\,dA
\end{equation}
in the case where the surface $\M$ is the graph of a function $u$. Here $\M$ ranges over all $n$-dimensional manifolds in $\R^{n+1}$, $n\geq1$, with prescribed boundary, $H$ is the mean curvature of $\M$, $\nabla_{\M}$ represents the tangential gradient over $\M$ and $dA$ is the differential of surface area. Since \eqref{eq.min} minimizes the quadratic variation of the mean curvature of $\M$, surfaces with
constant mean curvature such as spheres, cylinders, planes and minimal surfaces have zero energy.

Our main results are the existence of smooth surfaces of minimum mean curvature variation,
see Theorems \ref{thm:simplifiedexistence} and \ref{thm:geometricexistence}, and the existence
and regularity of variational solutions to \eqref{eq.min}, see Theorem \ref{thm.weak}.

There are several difficulties when considering the geometric minimization problem \eqref{eq.min}.
One main obstacle is the highly nonlinear and degenerate nature of the problem.
If a manifold $\M$ is a smooth minimizer then, by performing normal variations, it can be seen that the Euler--Lagrange equation
satisfied by the mean curvature $H$ of $\M$ is
$$\Delta^2_\M H+2(2H^2-K)\Delta_\M H+2\langle\nabla_\M H,\diamond H\rangle-2H|H|^2=0$$
where $\Delta_\M$ is the Laplace--Beltrami operator on $\M$, $K$ is the Gaussian curvature  and
$\diamond H$ denotes second tangent operator over $\M$, see \cite{Xu-Zhang}. This is a nonlinear sixth-order equation
for $u$ (the function that represents the graph of $\M$) which, at this moment, seems to be analytically quite intractable. In fact, comparison principles
and uniqueness of solutions for this equation are not known. If one writes $H$ in terms of
a parametrization of $\M$ and looks at \eqref{eq.min} as an energy
given by third order derivatives of the parametrization then the nonlinear degenerate structure
prevents from applying minimization techniques in typical Hilbert spaces like the Sobolev space $W^{3,2}(\Omega)$.
In addition, the solvability of the prescribed mean curvature equation depends in a nontrivial way on geometric properties
of the boundary, see \cite{Gilbarg-Trudinger}.

One of the motivations for studying \eqref{eq.min} comes from computer-aided design (CAD) and computer-aided manufacturing (CAM)
problems in engineering, aerospace industry, computer animation and architectural design.
A typical problem in CAD and CAM is the robust design of fair surfaces and
the creation of shapes such that some aspects of the design process
are optimized, see \cite{Sacar-Rao-Narayan}. In many instances, the goal is the creation of complex,
smoothly shaped models and surfaces with specified geometric constraints, see \cite{Welch-Witkin}.
Typically, these problems are approached via a variational principle. In this context,
in 1992, Moreton and S\'equin proposed in \cite{Moreton-Sequin} a numerical algorithm
to create $2$-dimensional fair surfaces $\M$ as minimizers of the geometric energy functional in \eqref{eq.min}.
In \cite{Xu-Zhang}, a finite difference method was proposed to construct surfaces
as steady states of a sixth order flow derived from the Euler--Lagrange equation associated with \eqref{eq.min}.

Perhaps the most important aspect in the applications mentioned above is to obtain surfaces that preserve several degrees of geometric continuity
where two different surfaces meet.
In particular, global continuity of the mean curvature is fundamental in
concrete problems such as the design of streamlined surfaces of aircrafts,
ships and cars, and numerical evidence of this regularity has been observed in \cite{Xu-Zhang}.
Furthermore, minimization of geometric functionals given in terms of the mean curvature are important
in continuum mechanics as they account for the bending energy of elastic materials \cite{Capovilla,Zorin}. 

Up to the best of our knowledge, the analytical theory for the existence
of surfaces of minimum mean curvature variation in general is missing.
No proof of regularity of minimizers and their mean curvature has been available thus far. 
Our aim is to fill these gaps and establish an analytical foundation from the PDE
and variational perspectives.

Before describing our results, we set up the notation.
From now on, we assume that $\M$ is the graph of a real-valued
function $u$ defined on a bounded domain $\Omega\subset\R^n$, $n\geq1$,
namely, $\M=\left\{(x,u(x)):x\in\overline{\Omega}\right\}$.
For $x\in \Omega$,
the upward pointing unit normal at $(x,u(x))$ is
\[
\nu(x)=\frac{(-\nabla u(x),1)}{(1+|\nabla u(x)|^2)^{1/2}}
\]
and the mean curvature $H=H(x)$ is
\[
H=\frac{1}{n}\dive\left(\frac{\nabla u}{(1+|\nabla u|^2)^{1/2}}\right).
\]
We let $D(u):=(1+|\nabla u|^2)^{1/2}$ so that $dA=D(u)\,dx$. It is not difficult to see that
$$|\nabla_\M H|^2=|\nabla H|^2-\bigg|\frac{\nabla u\cdot\nabla H}{D(u)}\bigg|^2$$
where $\nabla$ is the usual Euclidean gradient. Then the geometric energy in \eqref{eq.min} becomes
\begin{equation}\label{eq.geometricenergy}
E[\M]:=\frac{1}{2}\int_\Omega\bigg[|\nabla H|^2-\bigg|\frac{\nabla u\cdot\nabla H}{D(u)}\bigg|^2\bigg]D(u)\,dx.
\end{equation}
By the Cauchy--Schwartz inequality,
$$\frac{|\nabla H|^2}{D(u)^2}\leq|\nabla_\M H|^2\leq|\nabla H|^2.$$
Therefore, we will also consider the (larger) simplified energy functional
\begin{equation}\label{eq.energy}
E[H,u]:=\frac{1}{2}\int_{\Omega}|\nabla H|^2D(u)\,dx.
\end{equation}

In Section \ref{section:simplifiedexistence} we prove the existence of a smooth function $u$
that satisfies the prescribed mean curvature equation and whose mean curvature $H$
is regular up to the boundary $\partial\Omega$ and, in addition, minimizes the simplified energy functional
\eqref{eq.energy}, see Theorem \ref{thm:simplifiedexistence}. Then, in Section \ref{section:geometricexistence},
we show how to modify our procedure to construct regular solutions for the case of
the geometric energy functional \eqref{eq.geometricenergy}, see Theorem \ref{thm:geometricexistence}.
We believe that our method of proof, which is based on a fixed-point argument, will help establish
numerical and computational schemes for the construction of minimizers whose mean curvature
is guaranteed to be globally continuous. Indeed, our iterative procedure
begins by linearizing the energy at some $v$, so that the
numerical iteration may be initiated at, say, a minimal surface $v$,
and continued by solving a linear Euler--Lagrange equation for $H$.
Moreover, our regularity results are sharp and global, not requiring any extra additional assumptions on
the boundary of the domain, so they will be useful for proving precise rates of convergence.

Next, in Section \ref{section:weakexistence}, we establish a variational formulation
for constructing minimizers of \eqref{eq.energy} and show global regularity results for $u$ and $H$, see Theorem \ref{thm.weak}.
We highlight that setting up an appropriate functional setting to look for minimizers is a nontrivial task for
reasons like the ones we mentioned before. For instance,
the Euler--Lagrange equation seems quite intractable with the available tools and
the nonlinear degenerate structure prevents us from using typical Hilbert space methods.
We overcome these difficulties by looking for a minimizer pair $(u,H)$,
where $u$ is the solution to the variational or weak formulation of the prescribed mean curvature equation
with right hand side $H$. Our functional setting requires mild conditions on $u$ and $H$ on which
we can use geometric measure theory tools to recover enough compactness. We also show that
our minimizer pairs $(u,H)$ satisfy the required mean curvature global continuity required in applications \cite{Moreton-Sequin,Xu-Zhang}.

\section{Existence and regularity for the simplified energy}\label{section:simplifiedexistence}

In this section we work with the simplified energy functional \eqref{eq.energy}.
Consider a bounded domain $\Omega$ such that $\partial\Omega\in C^{2,\alpha}$, for some $0<\alpha<1$ fixed.
We assume that we are given prescribed boundary values $g\in C^{2,\alpha}(\overline{\Omega})$
for $u$ and $h\in C^{1,\alpha}(\overline{\Omega})$ for $H$ on $\partial\Omega$.

We address the following problem: given $\Omega$ and the boundary datum $g$, find a surface
$\M$ given by the graph of a function $u$
such that $u=g$ on $\partial\Omega$ and its mean curvature $H$ is a minimizer of \eqref{eq.energy} among all
functions with prescribed boundary values $h$. 

Consider the Banach space $\mathfrak{B}=C^{1,\alpha}(\overline{\Omega})$ and its subset
$\mathfrak{G}:=\big\{v\in C^{1,\alpha}(\overline{\Omega}):v=g~\hbox{on}~\partial\Omega\big\}$.
Clearly, $\mathfrak{G}$ is nonempty, closed and convex.
For any $v\in\mathfrak{G}$, we define the functional
\begin{equation}\label{eq.Ev}
E[H,v]:=\frac{1}{2}\int_{\Omega}|\nabla H|^2D(v)\,dx.
\end{equation}

A map $T:\mathfrak{G}\to\mathfrak{G}$ is constructed in a two-step process.
First, given any $v\in\mathfrak{G}$, let $H\in W^{1,2}(\Omega)$ be the unique minimizer
to \eqref{eq.Ev} such that $H-h\in W^{1,2}_0(\Omega)$, which exists because 
$1\leq D(v)\leq(1+\|\nabla v\|_{L^\infty(\Omega)}^2)^{1/2}$. Then $H$ is the unique weak solution to
$$\begin{cases}
\dive(D(v)\nabla H)=0&\hbox{in}~\Omega\\
H=h&\hbox{on}~\partial\Omega.
\end{cases}$$
Since $v\in C^{1,\alpha}(\overline{\Omega})$, by global Schauder estimates (see \cite{Gilbarg-Trudinger}),
\begin{equation}\label{eq:SchauderH}
\|H\|_{C^{1,\alpha}(\overline{\Omega})}\leq C_n[\partial\Omega]_{C^{1,\alpha}}\|D(v)\|_{C^{0,\alpha}(\overline{\Omega})}
\|h\|_{C^{1,\alpha}(\partial\Omega)}
\end{equation}
where $C_n>0$ is a constant that depends only on $n$.

Second, having constructed this $H$, we find the solution $u$ to the prescribed
mean curvature equation
\begin{equation}\label{eq:pmc}
\begin{cases}
\displaystyle\dive\bigg(\frac{\nabla u}{D(u)}\bigg)=nH&\hbox{in}~\Omega\\
u=g&\hbox{on}~\partial\Omega.
\end{cases}
\end{equation}
For this, we use the following sharp existence result, see \cite{Gilbarg-Trudinger}.

\begin{thm}\label{thm.pmc}
Let $\Omega\subset\R^n$ be a bounded domain with $C^{2,\alpha}$ boundary, for some $0<\alpha<1$.
Suppose that $H\in C^1(\overline{\Omega})$ satisfies
\begin{equation}\label{eq.conLnH}
\|H\|_{L^n(\Omega)}<|B_1|^{1/n}
\end{equation}
and, for any $y\in\partial\Omega$,
\begin{equation}\label{eq.mcbdry}
|H(y)|\leq \frac{n-1}{n}H_{\partial\Omega}(y)
\end{equation}
where $|B_1|$ is the measure of the unit ball in $\R^n$ and
$H_{\partial\Omega}$ is the mean curvature of $\partial\Omega$ corresponding to the inner unit normal vector to $\partial\Omega$. 
Then for any $g\in C^{2,\alpha}(\overline{\Omega})$ there exists a unique solution $u\in C^{2,\alpha}(\overline{\Omega})$ 
to \eqref{eq:pmc}. In particular, there exists a constant $C_\ast>0$ depending only on $n$, $\alpha$, $\|H\|_{L^n(\Omega)}$,
$\|H\|_{C^1(\overline{\Omega})}$, $\|g\|_{C^{2,\alpha}(\overline{\Omega})}$ and $\Omega$ such that
\begin{equation}\label{eq:C2alphabound}
\|u\|_{C^{2,\alpha}(\overline{\Omega})}\leq C_\ast.
\end{equation}
\end{thm}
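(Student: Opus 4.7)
The plan is to establish Theorem \ref{thm.pmc} via the method of continuity (Leray--Schauder) applied to the quasilinear operator $Q[u] := \dive(\nabla u/D(u)) - nH$, which is the standard route in Gilbarg--Trudinger Chapter~16. I would embed \eqref{eq:pmc} into the one-parameter family
$$Q_t[u] := \dive\bigg(\frac{\nabla u}{D(u)}\bigg) - ntH = 0 \quad\text{in } \Omega, \qquad u = tg \quad\text{on } \partial\Omega,$$
for $t\in[0,1]$, so that $t=0$ is solved by $u\equiv 0$ and $t=1$ is the target problem. The set $S\subset[0,1]$ of $t$ for which a $C^{2,\alpha}(\overline{\Omega})$ solution exists is nonempty, and it is open by the implicit function theorem in $C^{2,\alpha}$, since the linearization of $Q_t$ at a solution is a uniformly elliptic linear operator with $C^\alpha$ coefficients on a $C^{2,\alpha}$ domain, to which classical Schauder theory applies. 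The crux is to show that $S$ is closed, which reduces to an a priori estimate $\|u_t\|_{C^{2,\alpha}(\overline{\Omega})} \leq C_\ast$ independent of $t$; since the hypotheses on $H$ and $g$ scale favorably in $t$, the constants for $tH$ and $tg$ are bounded by those for $H$ and $g$.

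The required a priori bound I would build up in four stages. First, a $C^0$ estimate obtained by testing the equation against suitable truncations of $u$ and applying the Sobolev--isoperimetric inequality on the level sets: this is precisely where the smallness condition $\|H\|_{L^n(\Omega)} < |B_1|^{1/n}$ enters, giving a uniform sup bound in terms of $\|g\|_\infty$ and the gap to the critical constant. Second, a boundary gradient estimate via upper and lower barriers of the form $g(x) \pm \psi(d(x))$, where $d$ is the distance to $\partial\Omega$ and $\psi$ is a concave increasing function vanishing at $0$; a direct computation using the identity $\Delta d = -(n-1)H_{\partial\Omega} + O(d)$ near $\partial\Omega$ shows that $Q[g\pm\psi(d)]$ has the correct sign in a tubular neighborhood exactly under the hypothesis \eqref{eq.mcbdry}. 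Third, an interior gradient estimate of Bombieri--De Giorgi--Miranda type, obtained by applying a maximum-principle argument to $\log D(u)$ coupled with a logarithmic cut-off, which together with the boundary estimate yields a global bound on $\|\nabla u\|_\infty$ depending only on the listed quantities. Fourth, once $|\nabla u|$ is bounded, the equation becomes uniformly elliptic in quasilinear divergence form with $C^{0,\alpha}$ right-hand side $nH$, so De~Giorgi--Nash--Moser gives $u\in C^{1,\beta}$ and then the global Schauder estimates upgrade this to the desired $C^{2,\alpha}$ bound.

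For uniqueness, if $u_1, u_2 \in C^{2,\alpha}(\overline{\Omega})$ both solve \eqref{eq:pmc} with the same data, then $w := u_1 - u_2$ vanishes on $\partial\Omega$ and satisfies a linear homogeneous divergence-form equation
$$\dive\bigg(\int_0^1 DA(s\nabla u_1 + (1-s)\nabla u_2)\,ds\cdot \nabla w\bigg) = 0$$
with $A(p) = p/(1+|p|^2)^{1/2}$, whose coefficient matrix is uniformly elliptic on the compact range of $\nabla u_1, \nabla u_2$; the weak maximum principle then forces $w\equiv 0$.

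The main obstacle I anticipate is the boundary gradient estimate, because the hypothesis \eqref{eq.mcbdry} is known to be sharp: any relaxation produces counterexamples where no classical solution exists for certain boundary data. One therefore cannot afford any slack in the barrier construction, and the design of $\psi$ must carefully match the principal-curvature information of $\partial\Omega$ to the prescribed $H$ at the boundary. The other steps, while technical, are comparatively routine applications of the quasilinear elliptic machinery once the sharp Serrin-type barrier has been built.
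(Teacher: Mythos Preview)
Your outline is correct and is essentially the classical proof from Gilbarg--Trudinger, Chapter~16, which is exactly what the paper invokes. Note, however, that the paper does \emph{not} give its own proof of this theorem: it is stated as a black-box citation (``we use the following sharp existence result, see \cite{Gilbarg-Trudinger}'') and used as input for the fixed-point construction in Section~\ref{section:simplifiedexistence}. So there is no ``paper's own proof'' to compare against; your proposal simply unpacks the cited reference, and the four-stage a~priori estimate plus Leray--Schauder continuation you describe is precisely the route taken there.
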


Now \eqref{eq.conLnH} and \eqref{eq.mcbdry} impose further restrictions on the boundary values $h$ of $H$
(see also Remark \ref{rem:nonexistence}). By the maximum principle, if we assume that
\begin{equation}\label{eq:h2}
\max_{\partial\Omega}|h|<\left(\frac{|B_1|}{|\Omega|}\right)^{1/n}
\end{equation}
then \eqref{eq.conLnH} holds. Condition \eqref{eq.mcbdry} is natural
and cannot be avoided (see Remark \ref{rem:nonexistence}). Therefore, we assume that
$h$ additionally satisfies
\begin{equation}\label{eq:h1}
|h(y)|\leq \frac{n-1}{n}H_{\partial\Omega}(y)\qquad\hbox{for all}~y\in\partial\Omega.
\end{equation}
Therefore, under the additional assumptions \eqref{eq:h2} and \eqref{eq:h1}, we can apply Theorem \ref{thm.pmc} and
find the unique solution $u\in C^{2,\alpha}(\overline{\Omega})$ to \eqref{eq:pmc}. This completes the second step.

We then define $T:\mathfrak{G}\to\mathfrak{G}$ by $T(v)=u$.

Let us prove that $T$ is continuous. Fix $v_1\in\mathfrak{G}$ and $\varepsilon>0$.
We need to show that there exists $\delta=\delta(\vep,v_1)>0$ such that
for any $v_2\in\mathfrak{G}$ satisfying $\|v_1-v_2\|_{C^{1,\alpha}(\overline{\Omega})}<\delta$ we have
$\|u_1-u_2\|_{C^{1,\alpha}(\overline{\Omega})}<\vep$, where $u_j=Tv_j$, for $j=1,2$.
Let $H_j$ denote the minimizer of $E[\cdot,v_j]$, $j=1,2$, as constructed in the first step. Then
$H=H_1-H_2$ is the unique weak solution to
$$\begin{cases}
\dive(D(v_1)\nabla H)=\dive\big((D(v_2)-D(v_1))\nabla H_2\big)&\hbox{in}~\Omega\\
H=0&\hbox{on}~\partial\Omega.
\end{cases}$$
By global Schauder estimates,
\begin{equation}\label{eq:primera}
\begin{aligned}
\|H\|_{C^{1,\alpha}(\overline{\Omega})} &\leq C_n[\partial\Omega]_{C^{1,\alpha}}\|D(v_1)\|_{C^{0,\alpha}(\overline{\Omega})}
\|(D(v_2)-D(v_1))\nabla H_2\|_{C^{0,\alpha}(\overline{\Omega})} \\
&\leq C(n,\alpha,\Omega,v_1,\nabla H_2)\|v_1-v_2\|_{C^{1,\alpha}(\overline{\Omega})}=:C_1\|v_1-v_2\|_{C^{1,\alpha}(\overline{\Omega})}.
\end{aligned}
\end{equation}
To estimate the difference $u=u_1-u_2\in C^{2,\alpha}(\overline{\Omega})$, observe that
$$\begin{cases}
\displaystyle\dive\bigg(\frac{\nabla u_1}{D(u_1)}-\frac{\nabla u_2}{D(u_2)}\bigg)=nH &\hbox{in}~\Omega\\
u=0&\hbox{on}~\partial\Omega.
\end{cases}$$
The vector field
\[
F(p):=\frac{p}{\sqrt{1+|p|^2}}\qquad p\in\R^n
\]
is smooth and bounded and its gradient $\nabla F(p)$ is a bounded, symmetric matrix.
Moreover, $\nabla F(p)$ is locally strictly elliptic, namely, for any $\xi\in\R^n$,
$$\sum_{i,j=1}^n\partial_jF_i(p)\xi_i\xi_j \geq\frac{|\xi|^2}{D(p)^3}\geq\theta(R)|\xi|^2$$
for all $|p|<R$, where $\theta(R)\to 0$ as $R\to\infty$.
Furthermore,
\begin{align*}
F(\nabla u_1)-F(\nabla u_2) & = \int_0^1\frac{d}{dt}F(t\nabla u_1+(1-t)\nabla u_2)\,dt \\
& = \int_0^1\nabla F(t\nabla u_1+(1-t)\nabla u_2)\nabla (u_1-u_2)\,dt=:A(x)\nabla u
\end{align*}
where $A(x)$ is symmetric and bounded.
Now, $u_1\in C^{2,\alpha}(\overline{\Omega})$ is fixed. By \eqref{eq:C2alphabound},
the $C^{2,\alpha}(\overline{\Omega})$ norm of $u_2$
is uniformly controlled by the $C^1(\overline{\Omega})$ norm of $H_2$, which in turn is uniformly close to
the $C^1(\overline{\Omega})$ norm of the initially fixed $H_1$. Therefore,
$A(x)$ is strictly elliptic. Moreover, since $\nabla F(p)$ and $D^2F(p)$ are bounded
and $\nabla u_1,\nabla u_2\in C^{0,\alpha}(\overline{\Omega})$, it can be verified that
$$\|A\|_{C^{0,\alpha}(\overline{\Omega})}\leq M$$ 
with $M>0$ a constant depending only on $n$, $\alpha$, $\|H_1\|_{L^n(\Omega)}$,
$\|H_1\|_{C^1(\overline{\Omega})}$, $\|g\|_{C^{2,\alpha}(\overline{\Omega})}$, and $\Omega$, see \eqref{eq:C2alphabound}.
All of these quantities are independent of $u_2$ if $v_2$ is close to $v_1$ in $C^{1,\alpha}(\overline{\Omega})$.
In summary, we have found that $u$ is a solution to
$$\begin{cases}
\displaystyle\dive(A(x)\nabla u)=nH &\hbox{in}~\Omega\\
u=0&\hbox{on}~\partial\Omega
\end{cases}$$
and so, by Schauder estimates, 
\begin{equation}\label{eq:segunda}
\|u\|_{C^{1,\alpha}(\overline{\Omega})} \leq C_n[\partial\Omega]_{C^{1,\alpha}}M\|H\|_{C^{1,\alpha}(\overline{\Omega})}
=:C_2\|H\|_{C^{1,\alpha}(\overline{\Omega})}.
\end{equation}
Collecting estimates \eqref{eq:primera} and \eqref{eq:segunda} and recalling that
$u=u_1-u_2=Tv_1-Tv_2$, and $H=H_1-H_2$, we obtain $\|Tv_1-Tv_2\|_{C^{1,\alpha}(\overline{\Omega})}\leq C_1C_2\|v_1-v_2\|_{C^{1,\alpha}(\overline{\Omega})}$.
If we choose $\delta=\vep/(C_1C_2)$ then we conclude that $T$ is continuous, as desired.

Let us next prove that $T(\mathfrak{G})$ is precompact.
Let $\{v_k\}_{k\geq1}$ be a sequence in $\mathfrak{G}$ such that
$$\sup_{k\geq1}\|v_k\|_{C^{1,\alpha}(\overline{\Omega})}\leq N_1<\infty.$$
Consider the corresponding solutions $H_k\in C^{1,\alpha}(\overline{\Omega})$ found in the first step.
Set $u_k=Tv_k$. By \eqref{eq:C2alphabound},
$$\|u_k\|_{C^{2,\alpha}(\overline{\Omega})}\leq C_{k}$$
where $C_k>0$ is a constant depending only on $n$, $\alpha$, $\|H_k\|_{L^n(\Omega)}$, $\|H_k\|_{C^1(\overline{\Omega})}$, $\|h\|_{C^{2,\alpha}(\overline{\Omega})}$, and $\Omega$. Since all $H_k$ have the same boundary values $h$, by the maximum principle,
$$\sup_{k\geq1}\|H_k\|_{L^n(\Omega)}=N_2<\infty.$$ Furthermore, from the $C^{1,\alpha}$ estimate in \eqref{eq:SchauderH},
$$\sup_{k\geq1}\|H_k\|_{C^1(\overline{\Omega})}\leq C_n[\partial\Omega]_{C^{1,\alpha}}
\|h\|_{C^{1,\alpha}(\partial\Omega)}\sup_{k\geq1}\|D(v_k)\|_{C^{0,\alpha}(\overline{\Omega})}=N_3<\infty.$$
Consequently,
$$\sup_{k\geq1}\|u_k\|_{C^{2,\alpha}(\overline{\Omega})}\leq \sup_{k\geq1}C_{k}=N_4<\infty.$$
By the Arzel\`a--Ascoli compact embedding theorem, there exist a subsequence $\{u_{k_j}\}_{j\geq1}$ of $\{u_k\}_{k\geq1}$ and $u\in\mathfrak{G}$
such that $u_{k_j}\to u$ in $C^{1,\alpha}(\overline{\Omega})$, as desired.

Thus, by Schauder's fixed point theorem, there exists $u\in\mathfrak{G}$ such that $Tu=u$.
We have proved the following:

\begin{thm}[Existence for the simplified energy and regularity]\label{thm:simplifiedexistence}
Let $\Omega\subset\R^n$ be a bounded domain with $C^{2,\alpha}$ boundary $\partial\Omega$, for $0<\alpha<1$.
Fix $g\in C^{2,\alpha}(\overline{\Omega})$. Let $h\in C^{1,\alpha}(\overline{\Omega})$ such that
\begin{equation}\label{eq.condh2}
\max_{\partial\Omega}|h|<\left(\frac{|B_1|}{|\Omega|}\right)^{1/n}
\end{equation}
and
\begin{equation}\label{eq.condh1}
|h(y)|\leq\frac{n-1}{n}H_{\partial\Omega}(y)\qquad\hbox{for all}~y\in\partial\Omega,
\end{equation}
where $H_{\partial\Omega}$ is the mean curvature of $\partial\Omega$ corresponding to the inner unit normal vector to $\partial\Omega$.
Then there exist $u\in C^{2,\alpha}(\overline{\Omega})$ and $H\in C^{1,\alpha}(\overline{\Omega})$
such that $H$ minimizes the energy
$$\frac{1}{2}\int_\Omega|\nabla H|^2D(u)\,dx$$
among all $H\in W^{1,2}(\Omega)$ such that $H-h\in W^{1,2}_0(\Omega)$, or, equivalently,
$H$ is the unique weak solution to
$$\begin{cases}
\dive(D(u)\nabla H)=0&\hbox{in}~\Omega\\
H=h&\hbox{on}~\partial\Omega,
\end{cases}$$
and, in addition, $H$ is the mean curvature of the graph of $u$ with prescribed values on $\partial\Omega$:
$$\begin{cases}
\displaystyle \frac{1}{n}\dive\bigg(\frac{\nabla u}{D(u)}\bigg)=H&\hbox{in}~\Omega\\
u=g&\hbox{on}~\partial\Omega.
\end{cases}$$
\end{thm}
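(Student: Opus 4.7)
The plan is to apply Schauder's fixed point theorem to the two-step map $T:\mathfrak{G}\to\mathfrak{G}$ already sketched above, with $\mathfrak{G}$ a nonempty, closed, convex subset of the Banach space $C^{1,\alpha}(\overline{\Omega})$. Any fixed point of $T$ is automatically a pair $(u,H)$ of the type claimed, so the entire task reduces to verifying the hypotheses of the theorem.

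First, I would verify that $T$ is well-defined. Given $v\in\mathfrak{G}$, the coefficient $D(v)\in C^{0,\alpha}(\overline{\Omega})$ is bounded and uniformly positive, so $E[\,\cdot\,,v]$ is strictly convex and coercive on $h+W^{1,2}_0(\Omega)$; its unique minimizer $H$ is the weak solution to $\dive(D(v)\nabla H)=0$ with $H=h$ on $\partial\Omega$ and, by global Schauder estimates, lies in $C^{1,\alpha}(\overline{\Omega})$ with the bound \eqref{eq:SchauderH}. The maximum principle together with the hypotheses \eqref{eq.condh2} and \eqref{eq.condh1} on $h$ delivers both the $L^n$ smallness $\|H\|_{L^n(\Omega)}<|B_1|^{1/n}$ and the pointwise boundary curvature condition required by Theorem \ref{thm.pmc}, which then produces a unique $u=Tv\in C^{2,\alpha}(\overline{\Omega})$ solving \eqref{eq:pmc}.

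Next, I would check the two hypotheses of Schauder's theorem: continuity of $T$ in $C^{1,\alpha}(\overline{\Omega})$ and precompactness of $T(\mathfrak{G})$. For continuity, take $v_1,v_2\in\mathfrak{G}$ close in $C^{1,\alpha}$ with corresponding images $(H_j,u_j)$; the difference $H_1-H_2$ satisfies a linear divergence-form Dirichlet problem whose right-hand side is controlled by $(D(v_1)-D(v_2))\nabla H_2$, so Schauder yields $\|H_1-H_2\|_{C^{1,\alpha}}\lesssim\|v_1-v_2\|_{C^{1,\alpha}}$. Linearizing $F(p)=p/\sqrt{1+|p|^2}$ along the segment joining $\nabla u_2$ to $\nabla u_1$ rewrites $u_1-u_2$ as the solution of $\dive(A(x)\nabla(u_1-u_2))=n(H_1-H_2)$ with $A$ symmetric, $C^{0,\alpha}$, and strictly elliptic, and another Schauder estimate closes the loop. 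For precompactness, given a $C^{1,\alpha}$-bounded sequence $\{v_k\}$, the bound \eqref{eq:SchauderH} combined with the maximum principle produces uniform $C^{1,\alpha}$ and $L^n$ bounds on $\{H_k\}$; feeding these into \eqref{eq:C2alphabound} yields a uniform $C^{2,\alpha}$ bound on $\{Tv_k\}$, and Arzel\`a--Ascoli extracts a $C^{1,\alpha}$-convergent subsequence.

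The main technical obstacle will be keeping the ellipticity of $A(x)$ from degrading in the continuity step: since $\nabla F(p)$ degenerates like $D(p)^{-3}$ as $|p|\to\infty$, uniform ellipticity of $A$ requires a uniform $C^1$ bound on $u_1$ and $u_2$ over a small $C^{1,\alpha}$-neighborhood of $v_1$. This is exactly what the sharp estimate \eqref{eq:C2alphabound} supplies, because its constant depends only on $\|H_1\|_{L^n(\Omega)}$, $\|H_1\|_{C^1(\overline{\Omega})}$, $\|g\|_{C^{2,\alpha}(\overline{\Omega})}$, and $\Omega$, all of which vary continuously with $v_1$ in $C^{1,\alpha}$. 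Once this stability is secured, Schauder's fixed point theorem yields $u\in\mathfrak{G}$ with $Tu=u$, and unwinding the two-step construction immediately gives the associated $H$ satisfying the minimization, weak PDE, and prescribed-mean-curvature identities asserted in the conclusion.
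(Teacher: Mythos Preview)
Your proposal is correct and follows essentially the same approach as the paper: you set up the two-step map $T$ on $\mathfrak{G}$, verify well-definedness via Schauder estimates and Theorem~\ref{thm.pmc}, establish continuity by linearizing $F(p)=p/\sqrt{1+|p|^2}$ along the segment to obtain a uniformly elliptic $A(x)$, prove precompactness via the uniform $C^{2,\alpha}$ bound \eqref{eq:C2alphabound} and Arzel\`a--Ascoli, and conclude with Schauder's fixed point theorem. Even the technical obstacle you single out---controlling the ellipticity of $A(x)$ through uniform $C^1$ bounds on $u_1,u_2$ coming from \eqref{eq:C2alphabound}---is exactly the point the paper isolates.
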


\begin{rem}[Nonexistence of solutions]\label{rem:nonexistence}
The conditions imposed on the curvature datum $h$ at the boundary in Theorem \ref{thm:simplifiedexistence}
come from restrictions already present when one seeks for solutions to the prescribed mean curvature equation.
Indeed, the equation for $H$ is uniformly elliptic when $u$ is, say,
Lipschitz continuous and, therefore, is always solvable. Next, if condition \eqref{eq.condh1} is not satisfied, that is,
\[ 
|h(y_0)|>\frac{n-1}{n}H_{\partial\Omega}(y_0)\qquad\hbox{for some}~y_0\in\partial\Omega
\]
and $h\geq 0$ (or $h\leq 0$) on $\partial\Omega$ then $H\geq 0$ (or $H\leq 0$) in $\Omega$
and we have that for any $\varepsilon>0$ there exists $g\in C^\infty(\overline\Omega)$ with $|g|<\varepsilon$
such that the prescribed mean curvature equation with curvature $H$ and boundary values $g$ is not solvable.
This is due to the fact that one cannot guarantee boundary gradient estimates, see \cite[Section~14.4]{Gilbarg-Trudinger}.

On the other hand, a necessary condition for existence of solutions of the prescribed mean curvature equation is
that $H$ satisfies
\begin{equation}\label{eq.condH3}
\left|\int_\Omega H\eta\,dx\right|\leq \frac{(1-\varepsilon_0)}{n}\int_\Omega|\nabla \eta|\,dx
\end{equation}
for some $\varepsilon_0>0$, for all $\eta\in C^1_c(\Omega)$,
see \cite{Giaquinta,Gilbarg-Trudinger} and Section \ref{section:weakexistence}.
It turns out that \eqref{eq.conLnH} implies \eqref{eq.condH3}.
This structural condition on $H$ can be guaranteed by imposing \eqref{eq.condh2}.
\end{rem}

\section{Existence and regularity for the geometric energy}\label{section:geometricexistence}

In this section we discuss how the technique we developed in the previous section can be applied to the geometric energy functional
$$E[\M]=\frac{1}{2}\int_\Omega\bigg[|\nabla H|^2-\bigg|\frac{\nabla u\cdot\nabla H}{D(u)}\bigg|^2\bigg]D(u)\,dx.$$
Let $\Omega$, $\alpha$, $h$ and $g$ be as in Section \ref{section:simplifiedexistence}.
Fix $v\in C^{1,\alpha}(\overline{\Omega})$ such that $v=g$ on $\partial\Omega$. Let
$$E_v[H]:=\frac{1}{2}\int_\Omega\bigg[|\nabla H|^2-\bigg|\frac{\nabla v\cdot\nabla H}{D(v)}\bigg|^2\bigg]D(v)\,dx=\int_\Omega
L(\nabla H)\,dx$$
where the smooth Lagrangian $L$ is given by
$$L(p)=\frac{1}{2}\bigg[|p|^2-\bigg|\frac{\nabla v\cdot p}{D(v)}\bigg|^2\bigg]D(v)\qquad\hbox{for}~p\in\R^n.$$
It can be seen that $L$ is coercive and uniformly convex, with
$$L_{p_ip_j}(p)\xi_i\xi_j =D(v)|\xi|^2-\frac{(\nabla v\cdot\xi)^2}{D(v)}\geq\frac{1}{D(v)}|\xi|^2.$$
Thus, there exists a unique minimizer $H\in W^{1,2}(\Omega)$ of $E_v[H]$
such that $H-h\in W^{1,2}_0(\Omega)$. In particular, $H$ is the unique weak solution to
$$\begin{cases}
\dive(a(x)\nabla H)=0&\hbox{in}~\Omega\\
H=h&\hbox{on}~\partial\Omega
\end{cases}$$
where $a^{ij}(x)=\delta_{ij}D(v)-v_{x_i}v_{x_j}/D(v)\in C^{0,\alpha}(\overline{\Omega})$ is
uniformly elliptic. Then~$H\in C^{1,\alpha}(\overline{\Omega})$~and
$$\|H\|_{C^{1,\alpha}(\overline{\Omega})}\leq C_n[\partial\Omega]_{C^{1,\alpha}}
\|v\|_{C^{1,\alpha}(\overline{\Omega})}\|h\|_{C^{1,\alpha}(\partial\Omega)}.$$ 
If $h$ satisfies \eqref{eq:h2} and \eqref{eq:h1} then Theorem \ref{thm.pmc} guarantees the existence
and uniqueness of the solution $u\in C^{2,\alpha}(\overline{\Omega})$ to \eqref{eq:pmc}. From here on we can
continue with the arguments we did in Section \ref{section:simplifiedexistence} and conclude the following result.

\begin{thm}[Existence for the geometric functional and regularity]\label{thm:geometricexistence}
Let $\Omega\subset\R^n$ be a bounded domain with $C^{2,\alpha}$ boundary $\partial\Omega$, for some $0<\alpha<1$.
Fix $g\in C^{2,\alpha}(\overline{\Omega})$. Let $h\in C^{1,\alpha}(\overline{\Omega})$ such that
$$\max_{\partial\Omega}|h|<\left(\frac{|B_1|}{|\Omega|}\right)^{1/n}$$
and
$$|h(y)|\leq\frac{n-1}{n}H_{\partial\Omega}(y)\qquad\hbox{for all}~y\in\partial\Omega,$$
where $H_{\partial\Omega}$ is the mean curvature of $\partial\Omega$ corresponding to the inner unit normal vector to $\partial\Omega$.
Then there exist $u\in C^{2,\alpha}(\overline{\Omega})$ and $H\in C^{1,\alpha}(\overline{\Omega})$
such that $H$ minimizes the energy
$$\frac{1}{2}\int_\Omega\bigg[|\nabla H|^2-\bigg|\frac{\nabla u\cdot\nabla H}{D(u)}\bigg|^2\bigg]D(u)\,dx$$
among all $H\in W^{1,2}(\Omega)$ such that $H-h\in W^{1,2}_0(\Omega)$, or, equivalently,
$H$ is the unique weak solution to
$$\begin{cases}
\dive(a(x)\nabla H)=0&\hbox{in}~\Omega\\
H=h&\hbox{on}~\partial\Omega,
\end{cases}$$
where
$$a^{ij}(x)=\delta_{ij}D(u)-\frac{u_{x_i}u_{x_j}}{D(u)}$$
and, in addition, $H$ is the mean curvature of the graph of $u$ with prescribed values on $\partial\Omega$:
$$\begin{cases}
\displaystyle \frac{1}{n}\dive\bigg(\frac{\nabla u}{D(u)}\bigg)=H&\hbox{in}~\Omega\\
u=g&\hbox{on}~\partial\Omega.
\end{cases}$$
\end{thm}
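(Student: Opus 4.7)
The plan is to carry out the same fixed-point scheme as in Section \ref{section:simplifiedexistence}, simply replacing the linear elliptic equation for $H$ by the one derived from the geometric Lagrangian. Much of the first step has already been completed in the text preceding the statement: for each $v\in\mathfrak{G}$, uniform convexity of $L$ yields a unique $H_v\in C^{1,\alpha}(\overline{\Omega})$ solving $\dive(a(x)\nabla H)=0$ in $\Omega$ with $H=h$ on $\partial\Omega$, where $a^{ij}(x)=\delta_{ij}D(v)-v_{x_i}v_{x_j}/D(v)$, together with the Schauder bound
$$\|H_v\|_{C^{1,\alpha}(\overline{\Omega})}\leq C_n[\partial\Omega]_{C^{1,\alpha}}\|v\|_{C^{1,\alpha}(\overline{\Omega})}\|h\|_{C^{1,\alpha}(\partial\Omega)}.$$
Under the hypotheses on $h$, Theorem \ref{thm.pmc} then produces a unique $u=Tv\in C^{2,\alpha}(\overline{\Omega})$ solving the prescribed mean curvature equation with curvature $H_v$ and boundary datum $g$. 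This defines $T:\mathfrak{G}\to\mathfrak{G}$.

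I would then prove that $T$ is continuous and $T(\mathfrak{G})$ is precompact in $\mathfrak{B}=C^{1,\alpha}(\overline{\Omega})$, after which Schauder's fixed point theorem produces the required pair $(u,H)$ with $u=Tu$ and $H=H_u$. Precompactness follows exactly as in Section \ref{section:simplifiedexistence}: a sequence $\{v_k\}\subset\mathfrak{G}$ uniformly bounded in $C^{1,\alpha}$ makes the coefficients $a_k^{ij}$ uniformly bounded in $C^{0,\alpha}$ and uniformly elliptic (ellipticity constant $1/D(v_k)\geq 1/(1+N_1^2)^{1/2}$), which gives uniform $C^{1,\alpha}$ bounds on $H_k$; the maximum principle controls $\|H_k\|_{L^n(\Omega)}$; and \eqref{eq:C2alphabound} from Theorem \ref{thm.pmc} yields a uniform $C^{2,\alpha}$ bound on $u_k=Tv_k$, so Arzel\`a--Ascoli supplies a subsequence converging in $C^{1,\alpha}(\overline{\Omega})$.

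The main step that requires adaptation is the continuity of $T$. Fixing $v_1$ and taking $v_2$ close to $v_1$ in $C^{1,\alpha}$, the difference $H=H_1-H_2$ satisfies
$$\dive(a_1(x)\nabla H)=\dive\bigl((a_2(x)-a_1(x))\nabla H_2\bigr)\quad\hbox{in}~\Omega,\qquad H=0\quad\hbox{on}~\partial\Omega.$$
The only real obstacle, compared to the simplified case, is to estimate $\|a_1-a_2\|_{C^{0,\alpha}(\overline{\Omega})}$ in terms of $\|v_1-v_2\|_{C^{1,\alpha}(\overline{\Omega})}$. Since $D(v)\geq 1$, each entry $a^{ij}$ is a smooth function of $\nabla v$, so a fundamental-theorem-of-calculus argument along the segment from $\nabla v_1$ to $\nabla v_2$, together with the local Lipschitz bounds on this smooth function on bounded sets, delivers the desired inequality. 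Global Schauder estimates then give $\|H\|_{C^{1,\alpha}(\overline{\Omega})}\leq C_1\|v_1-v_2\|_{C^{1,\alpha}(\overline{\Omega})}$. The subsequent linearization of the prescribed mean curvature operator $\dive(\nabla u/D(u))$ and the resulting bound $\|Tv_1-Tv_2\|_{C^{1,\alpha}(\overline{\Omega})}\leq C_2\|H\|_{C^{1,\alpha}(\overline{\Omega})}$ proceed verbatim as in Section \ref{section:simplifiedexistence}, since that step of the construction is unchanged. Composing the two estimates gives continuity of $T$ and closes the argument.
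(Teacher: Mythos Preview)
Your proposal is correct and follows essentially the same approach as the paper: define $T$ via the two-step process (minimize the geometric energy with $v$ frozen to get $H_v$, then solve the prescribed mean curvature equation), and verify the hypotheses of Schauder's fixed point theorem by adapting the continuity and precompactness arguments from Section~\ref{section:simplifiedexistence} to the new coefficients $a^{ij}(x)=\delta_{ij}D(v)-v_{x_i}v_{x_j}/D(v)$. In fact the paper itself simply says ``from here on we can continue with the arguments we did in Section~\ref{section:simplifiedexistence},'' so your write-up supplies more detail than the original on exactly the point that changes, namely the $C^{0,\alpha}$ estimate for $a_1-a_2$.
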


\section{Existence and regularity of variational solutions}\label{section:weakexistence}

We next develop the variational formulation to solve \eqref{eq.min}.
It is important to notice that the main result of this section, Theorem \ref{thm.weak}, is of a different nature than Theorems \ref{thm:simplifiedexistence}
and \ref{thm:geometricexistence}. Indeed, in Theorem \ref{thm.weak} we construct a minimizing pair $(u,H)$.

We start by recalling (see \cite{Giaquinta}) that $u\in \BV(\Omega)$ (the space of functions of bounded variation in $\Omega$)
is a generalized solution to the prescribed mean curvature equation
with (weak) mean curvature $H\in L^1(\Omega)$ and boundary value $g\in L^1(\partial\Omega)$ if 
\begin{equation}\label{eq.weakpmc}\tag{WPMC}
\J[u]=\min_{v\in\mathrm{BV}(\Omega)}\J[v]
\end{equation}
where
\[
\J[v]:=\int_\Omega D(v)+\int_\Omega nHv\,dx+\int_{\partial\Omega}|v-g|\,dS \\
\]
and
$$\int_\Omega D(v):=\sup\bigg\{\int_\Omega\bigg[v\sum_{i=1}^n\partial_{x_i}\phi_i+\phi_{n+1}\bigg]\,dx:
\phi_i\in C^1_c(\Omega),~\sum_{i=1}^{n+1}\phi_i^2\leq1\bigg\}.$$
It can be seen that (see \cite{Giusti2}) for $v\in W^{1,1}(\Omega)$ we have
\[
\int_\Omega D(v)=\int_\Omega(1+|\nabla v|^2)^{1/2}\,dx.
\]

In \cite{Giaquinta}, Giaquinta proved that if $H$ is a measurable function then
\eqref{eq.weakpmc} is solvable in $\BV(\Omega)$ if and only if there is $\varepsilon_0>0$ such that, for any
set of finite perimeter $A\subset\Omega$,
\begin{equation}\label{eq:Giaquintacondition}
\bigg|\int_AH\,dx\bigg|\leq(1-\varepsilon_0)\frac{1}{n}P(\partial A)
\end{equation}
where $P(\partial A)$ denotes the perimeter of $A$.

The area measure is defined by 
$$D(u)(U)=\sup\bigg\{\int_\Omega\bigg[u\sum_{i=1}^n\partial_{x_i}\phi_i+\phi_{n+1}\bigg]\,dx:
\phi_i\in C^1_c(U),~\sum_{i=1}^{n+1}\phi_i^2\leq1\bigg\}$$
for any $U\subset\subset\Omega$ open and
$D(u)(V)=\inf\left\{D(u)(U):V\subset U\text{ and }U\text{ is open}\right\}$,
whenever $V\subset\Omega$ is arbitrary.
It can be seen that if $u\in\BV(\Omega)$ then $D(u)$ is a Radon measure on $\R^n$.

From now on, we fix a bounded $C^2$ domain $\Omega$ and $g\in C^{1,\alpha}(\partial\Omega)$
for some $0<\alpha<1$. We consider the minimization problem
\[
\min_{(u,H)\in\A}\I[u,H]
\]
where
\begin{equation}\label{eq.I}
\I[u,H]:=\int_\Omega|\nabla H|^2\,dD(u)
\end{equation}
and $dD(u)$ stands for the area measure defined above. 
The admissible set $\A$ is defined as follows. Let $h\in W^{2,2}(\Omega)\cap\mathrm{Lip}(\partial{\Omega})$ satisfying 
\begin{equation}\label{eq.conh}
|h(y)|\leq\frac{n-1}{n}H_{\partial\Omega}(y),~y\in\partial\Omega,
\qquad\hbox{and}\qquad\max_{\partial\Omega}|h|\leq(1-\varepsilon_0)\left(\frac{|B_1|}{|\Omega|}\right)^{1/n},
\end{equation}
where $H_{\partial\Omega}(y)$ is the mean curvature of $\partial\Omega$ at $y\in\partial\Omega$, for some $0<\varepsilon_0<1$.
Define, for some $C_0>0$,
\begin{equation}\label{eq.A}
\A:=\left\{
\begin{array}{c}
(u,H)\in\BV(\Omega)\times(\Lip(\overline{\Omega})\cap W^{2,2}(\Omega)):u\text{ solves }\eqref{eq.weakpmc} \\
\hbox{and }\|H\|_{\Lip(\overline{\Omega})}+\|H\|_{W^{2,2}(\Omega)}\leq C_0,~H=h\text{ on }\partial\Omega
\end{array}
\right\}.
\end{equation}

\begin{thm}[Existence and regularity of variational solutions]\label{thm.weak}
Let $\Omega$ be a bounded domain with $C^2$ boundary $\partial\Omega$. Let $g\in C^{1,\alpha}(\partial\Omega)$
for some $0<\alpha<1$, and $h\in W^{2,2}(\Omega)\cap \mathrm{Lip}(\partial{\Omega})$
satisfying \eqref{eq.conh} for some $0<\varepsilon_0<1$. Let $\I$ be defined by \eqref{eq.I}.
Then there is $C_0>0$, depending only on $\partial\Omega$ and $\|h\|_{L^\infty(\partial\Omega)}$,
such that the admissible set $\A$ in \eqref{eq.A} is nonempty
and there exists a minimizer $(u_\infty,H_\infty)$ of $\I$ within the class $\A$.
Moreover, $u_\infty\in C^{1,\alpha}(\overline{\Omega})\cap C^{2,\alpha}_{\mathrm{loc}}(\Omega)$.
\end{thm}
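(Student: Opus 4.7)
The plan is the direct method in $\A$: compactness of a minimizing sequence will come from the quantitative bound defining $\A$ together with an \emph{a priori} $C^{1,\alpha}$ regularity for the BV solution $u$ of \eqref{eq.weakpmc}, and closure in the limit will then reduce to continuity of $\J$ and $\I$ under the available convergences. To begin, I would verify that $\A\neq\emptyset$: take $H$ to be a Lipschitz, $W^{2,2}$ extension of $h|_{\partial\Omega}$ to $\overline{\Omega}$ (say a harmonic-type extension), whose norms depend only on $\partial\Omega$ and $\|h\|_{L^\infty(\partial\Omega)}$. By the maximum principle, the extension inherits the pointwise bound in \eqref{eq.conh}, so $\|H\|_{L^n(\Omega)}\leq(1-\vep_0)|B_1|^{1/n}$; the isoperimetric inequality then gives
\[
\left|\int_A H\,dx\right|\leq \|H\|_{L^n(\Omega)}|A|^{(n-1)/n}\leq \frac{1-\vep_0}{n}P(\partial A),
\]
i.e.\ Giaquinta's criterion \eqref{eq:Giaquintacondition}. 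Hence \eqref{eq.weakpmc} admits a BV solution $u$ with this $H$, and $(u,H)\in\A$ as soon as $C_0$ dominates the $\Lip\cap W^{2,2}$ norm of the extension.

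Next, I claim that for \emph{every} $(u,H)\in\A$, $u$ is automatically of class $C^{1,\alpha}(\overline{\Omega})\cap C^{2,\alpha}_{\mathrm{loc}}(\Omega)$ with a H\"older bound depending only on $C_0$, $\partial\Omega$, $g$, and $\vep_0$. The Lipschitz bound on $H$ controls its $C^\alpha$ norm; the Serrin-type boundary condition $|h|\leq\tfrac{n-1}{n}H_{\partial\Omega}$ supplies the boundary gradient barriers; and the strict $L^n$-bound in \eqref{eq.conh} delivers the strict Giaquinta inequality needed for classical solvability. Thus the generalized BV solution coincides with a classical one, and interior $C^{2,\alpha}$ and global $C^{1,\alpha}$ estimates follow from the PMC regularity theory underlying Theorem~\ref{thm.pmc} (in its version with merely Lipschitz $H$, as in \cite{Gilbarg-Trudinger}).

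Given a minimizing sequence $(u_k,H_k)\in\A$, Banach--Alaoglu and Rellich--Kondrachov applied to the $W^{2,2}\cap\Lip$ bound yield, along a subsequence, $H_k\to H_\infty$ uniformly in $\overline{\Omega}$ and $\nabla H_k\to \nabla H_\infty$ strongly in $L^2(\Omega)$, with $H_\infty\in\Lip(\overline{\Omega})\cap W^{2,2}(\Omega)$, $H_\infty=h$ on $\partial\Omega$, and the $C_0$-bound inherited by lower semicontinuity. Arzel\`a--Ascoli, together with the uniform estimate of the previous step, yields $u_k\to u_\infty$ in $C^{1,\gamma}(\overline{\Omega})\cap C^{2,\gamma}_{\mathrm{loc}}(\Omega)$ for every $\gamma<\alpha$, so $u_\infty\in C^{1,\alpha}(\overline{\Omega})\cap C^{2,\alpha}_{\mathrm{loc}}(\Omega)$. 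That $u_\infty$ is a generalized solution of \eqref{eq.weakpmc} with right-hand side $H_\infty$ follows by passing to the limit in $\J[u_k]\leq \J[v]$ for arbitrary $v\in\BV(\Omega)$, using uniform convergence of $H_k$, $C^{1,\gamma}$-convergence of $u_k$, and continuity of the trace. Finally, since $u_k,u_\infty\in C^{1,\alpha}$, the measures $dD(u_k)$ and $dD(u_\infty)$ are absolutely continuous, so $\I$ is a Lebesgue integral; the uniform convergence $(1+|\nabla u_k|^2)^{1/2}\to (1+|\nabla u_\infty|^2)^{1/2}$ together with strong $L^1$-convergence of $|\nabla H_k|^2$ (from strong $L^2$-convergence of $\nabla H_k$ plus the uniform $L^\infty$-bound) yields $\I[u_k,H_k]\to \I[u_\infty,H_\infty]$, so $(u_\infty,H_\infty)$ is the sought minimizer.

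The principal obstacle is the uniform $C^{1,\alpha}(\overline{\Omega})$ regularity in the second step: interior H\"older regularity for the quasilinear PMC equation is classical once $H$ is H\"older, but the \emph{global} bound and, crucially, attainment of the boundary datum $g$ rest on the sharp Serrin-type conditions in \eqref{eq.conh}---without them, non-attainment can occur as indicated in Remark~\ref{rem:nonexistence}. A secondary subtlety is the calibration of $C_0$: it must be large enough to host the canonical admissible pair constructed in the first step, yet compatible, via the $\vep_0$-margin in \eqref{eq.conh}, with the strict Giaquinta condition required uniformly along the minimizing sequence and at the limit.
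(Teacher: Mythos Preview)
Your proposal is correct but takes a genuinely different route from the paper. The paper works primarily in $\BV$: it extracts $u_\infty$ via BV compactness and lower semicontinuity of the area measure, establishes that $u_\infty$ minimizes $\J_\infty$ through $\Gamma$-convergence of the functionals $\J_k$ to $\J_\infty$, and then proves $\I[u_\infty,H_\infty]\leq m$ by the convexity inequality
\[
\int_\Omega|\nabla H_k|^2\,dD(u_k)\geq \int_\Omega|\nabla H_\infty|^2\,dD(u_k)
+2\int_\Omega\nabla H_\infty\cdot(\nabla H_k-\nabla H_\infty)\,dD(u_k),
\]
invoking Lipschitz regularity of $u_k$ only to control the cross term. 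Regularity of $u_\infty$ is deduced \emph{a posteriori} from \cite{Giaquinta}. You instead front-load the uniform $C^{1,\alpha}(\overline{\Omega})$ estimate for every $u$ with $(u,H)\in\A$ (which the paper also cites, for equi-mild coercivity and the cross term) and use it as the main compactness mechanism: Arzel\`a--Ascoli gives $u_k\to u_\infty$ in $C^{1,\gamma}$, after which both the closure of the constraint $\J_\infty[u_\infty]\leq\J_\infty[v]$ and the continuity $\I[u_k,H_k]\to\I[u_\infty,H_\infty]$ become elementary Lebesgue-integral limits. Your route is shorter and avoids $\Gamma$-convergence and measure-theoretic lower semicontinuity altogether; the paper's route, by contrast, separates the variational construction from the regularity theory and would survive in settings where only weaker (say, Lipschitz or BV) a priori control on $u$ is available. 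One minor imprecision in your sketch: for a generic $H\in\A$ there is no maximum principle, so the second bound in \eqref{eq.conh} does not directly give $\|H\|_{L^n}\leq(1-\vep_0)|B_1|^{1/n}$; but this is not actually needed for the uniform $C^{1,\alpha}$ estimate you invoke, which rests on the Serrin boundary condition (automatic since $H=h$ on $\partial\Omega$) together with the H\"older bound on $H$ coming from $\|H\|_{\Lip}\leq C_0$.
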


To prove Theorem \ref{thm.weak} we need to recall the notion and properties of $\Gamma-$convergence (see \cite{Braides}) in our context.
Let $\J_k$, $k\geq1$, and $\J_\infty$ be functionals defined on $\BV(\Omega)$ and taking values in $[-\infty,\infty]$.
Then $\{\J_k\}_{k\geq1}$ is said to $\Gamma-$converge to $\J_\infty$ if the following two conditions hold:
\begin{enumerate}[$(a)$]
\item For every $v\in \BV(\Omega)$ and every sequence $\{v_k\}_{k\geq1}\subset\BV(\Omega)$
such that $v_k\rightarrow v$ in $\BV(\Omega)$ it holds 
\[
\liminf_{k\rightarrow\infty}\J_k(v_k)\geq \J_\infty(v).
\]
\item For every $v\in \BV(\Omega)$ there exists a sequence $\{v_k\}_{k\geq1}\subset\BV(\Omega)$ such that $v_k\rightarrow v$ in $\BV(\Omega)$ for which
\[
\limsup_{k\rightarrow\infty}\J_k(v_k)\leq \J_\infty(v).
\]
\end{enumerate}
We will use the following fact (see \cite[Theorem~1.21]{Braides}).
Suppose that $\{\J_k\}_{k\geq1}$ is an equi-mildly coercive
sequence of functionals on $\BV(\Omega)$ that $\Gamma-$converges to $\J_\infty$. Then there exits
\[
\min_{\BV(\Omega)}\J_\infty=\lim_{k\rightarrow\infty}\inf_{\BV(\Omega)}\J_k.
\]
Moreover, if $\{u_k\}_{k\geq1}\subset\BV(\Omega)$ is a precompact sequence in $\BV(\Omega)$ such that 
\[
\lim_{k\rightarrow\infty}\J_k(u_k)=\lim_{k\rightarrow\infty}\inf_{\BV(\Omega)}\J_k
\]
then every limit of $\{u_k\}_{k\geq1}$ is a minimum point for $\J_\infty$.
A functional $\J$ is mildly coercive in $\BV(\Omega)$ if there exists a nonempty compact set $K\subset\BV(\Omega)$
such that $\inf_K\J=\inf_{\BV(\Omega)}\J$, and equi-mild coercivity means that the set $K$
is the same for the whole sequence $\{\J_k\}_{k\geq1}$.

\begin{proof}[Proof of Theorem \ref{thm.weak}]
We begin by showing that $\A$ is nonempty. Let $H$ be the harmonic function in $\Omega$ such that $H=h$ on $\partial\Omega$.
By elliptic regularity, $H\in\Lip(\overline{\Omega})\cap W^{2,2}(\Omega)$ and
$\|H\|_{\Lip(\overline{\Omega})}+\|H\|_{W^{2,2}(\Omega)}\leq C_0$,
where $C_0=C_0(\partial\Omega,\|h\|_{L^\infty(\partial\Omega)})>0$. 
Moreover, by the H\"older and isoperimetric inequalities, 
\[
\bigg|\int_AH\,dx\bigg|\leq\|H\|_{L^n(\Omega)}|A|^{\frac{n-1}{n}}\leq \|H\|_{L^n(\Omega)}\frac{P(\partial A)}{n|B_1|^{1/n}}.
\]
The maximum principle and \eqref{eq.conh} give
\[
\|H\|_{L^n(\Omega)}\leq |\Omega|^{1/n}\max_{\partial\Omega}|h|\leq (1-\varepsilon_0)|B_1|^{1/n}.
\]
Therefore,
\[
\bigg|\int_AH\,dx\bigg|\leq\frac{(1-\varepsilon_0)}{n}P(\partial A)
\]
and \eqref{eq.weakpmc} is solvable for this $H$. If we let $u\in\BV(\Omega)$ be the corresponding minimizer of $\J$
then the pair $(u,H)$ is in $\A$.

We further point out that $\int_\Omega D(u)<\infty$ and $H\in\mathrm{Lip}(\overline{\Omega})$ so that 
\[
0\leq m:= \inf_{(u,H)\in\A}\I[u,H]\leq \|\nabla H\|_{L^\infty(\Omega)}^2D(u)(\Omega)<\infty.
\]

Consider next a minimizing sequence $\{(u_k,H_k)\}_{k\geq1}\subset\A$, that is,
$\lim_{k\rightarrow\infty}\I[u_k,H_k]=m$.
Since every $u_k$ is a minimizer of the functional $\J_k$ defined by
\begin{equation}\label{eq.Jk}
\J_k[v]:= \int_\Omega D(v)+\int_\Omega nH_kv\,dx+\int_{\partial\Omega}|v-g|\,dS
\end{equation}
we have that, for any $u_0\in\BV(\Omega)$, $\J_k(u_k)\leq\J_k(u_0)$, from where
\begin{equation}\label{estimatefornablauk}
\int_\Omega D(u_k)+\int_\Omega nH_ku_k\,dx\leq C+\int_\Omega nH_ku_0\,dx
\end{equation}
for $C>0$ independent of $k$. 
Now we estimate by below the second integral in the left hand side above as in \cite{Giaquinta}.
Indeed, we extend $H_k$ and $u_k$ as $0$ outside of $\Omega$ and write
$$\int_\Omega nH_ku_k\,dx=\int_{\R^n}nH_ku_k^+\,dx-\int_{\R^n}nH_ku_k^-\,dx$$
where $u^+,u^-\geq0$ denote the positive and negative parts of a function $u$, respectively. By
\eqref{eq:Giaquintacondition} and the coarea formula for functions of bounded variation (see \cite{Evans-Gariepy}),
\begin{align*}
\bigg|\int_{\R^n}nH_ku_k^{\pm}\,dx\bigg| &= \bigg|\int_0^\infty\int_{\{x:u_k^{\pm}(x)>t\}}nH_k\,dx\,dt\bigg| \\
&\leq (1-\vep_0)\int_0^\infty P(\partial\{x:u_k^{\pm}(x)>t\})\,dt \\
&=(1-\vep_0)\int_{\R^n}|\nabla u_k^{\pm}|.
\end{align*}
Hence,
\begin{align*}
\int_\Omega nH_ku_k\,dx &\geq-(1-\vep_0)\int_{\R^n}|\nabla u_k| \\
&=-(1-\vep_0)\int_{\Omega}|\nabla u_k|-(1-\vep_0)\int_{\partial\Omega}|u_k|\,dS \\
&=-(1-\vep_0)\int_{\Omega}|\nabla u_k|-C
\end{align*}
as $u_k=g$ on $\partial\Omega$ for all $k$ (see \cite{Giaquinta}). Using this in \eqref{estimatefornablauk},
$$\int_\Omega D(u_k) \leq (1- \varepsilon_0)\int_\Omega|\nabla u_k| + n\|H_k\|_{L^\infty(\Omega)}\|u_0\|_{L^1(\Omega)}+C$$
for a new constant $C>0$ that is independent of $k$. Moreover, the uniform bound on the
$L^\infty(\Omega)$ norm of $\{H_k\}_{k\geq1}$ gives
\[
\int_\Omega|\nabla u_k| \leq (1- \varepsilon_0)\int_\Omega|\nabla u_k| + nC_0\|u_0\|_{L^1(\Omega)}+C.
\]
so that 
\[
\vep_0\int_\Omega|\nabla u_k| \leq nC_0\|u_0\|_{L^1(\Omega)}+C.
\]
Hence, by compactness in $\BV(\Omega)$, there exist a subsequence of $\{u_k\}_{k\geq1}$,
still denoted by the same indexes, and $u_\infty\in \BV(\Omega)$ such that 
$u_k\rightarrow u_\infty$ in $L^1(\Omega)$ as $k\rightarrow\infty$, and
$|\nabla u_\infty|(U)\leq \liminf_{k\to\infty}|\nabla u_k|(U)$ for any Borel set $U\subset\Omega$. In addition,
\begin{equation}\label{eq.lscmeasures}
D(u_\infty)(U)\leq \liminf_{k\rightarrow\infty} D(u_k)(U).
\end{equation}
By Poincar\'e's inequality and the Rellich--Kondrachov compactness theorem,
there exist a subsequence of $\{H_k\}_{k\geq1}$, still denoted by the same indexes, and $H_\infty\in W^{2,2}(\Omega)$ such that
\begin{equation}\label{eq.nablaH}
\nabla H_k\rightarrow \nabla H_\infty\qquad\hbox{in}~L^2(\Omega),~\hbox{as}~k\rightarrow\infty.
\end{equation}
Further, due to the uniform bound on $\|H_k\|_{\Lip(\overline{\Omega})}$,
we may assume that $H_k$ and $\nabla H_k$ converge weak-$\ast$ in $L^\infty(\Omega)$ to $H_\infty\in\Lip(\overline{\Omega})$.
Finally, this and the weak convergence of $H_k$ to $H_\infty$ in $W^{2,2}(\Omega)$ ensure that
\[
\|H_\infty\|_{\Lip(\overline{\Omega})}+\|H_\infty\|_{W^{2,2}(\Omega)}\leq C_0.
\]

Let us now prove that $(u_\infty,H_\infty)\in\A$. Recall the functionals $\J_k$ defined in \eqref{eq.Jk}
for the subsequence $H_k$ we just found, and define $\J_\infty$ analogously.
We claim that $\{\J_k\}_{k\geq1}$ $\Gamma-$converges to $\J_\infty$.
Indeed, it is sufficient to prove the $\Gamma-$convergence of 
\[
\widetilde{\J}_k(v):=\int_\Omega vH_k\,dx
\qquad\hbox{to}\qquad
\widetilde{\J}_\infty(v):=\int_\Omega vH_\infty\,dx
\]
since the other two terms are continuous perturbations of $\J_k$ (see \cite{Braides}).
To prove the liminf inequality $(a)$, let $\{v_k\}_{k\geq1}\subset\BV(\Omega)$ and $v\in\BV(\Omega)$
such that $v_k\rightarrow v$ in $\BV(\Omega)$. We write
\[
\int_\Omega v_kH_k\,dx-\int_\Omega vH_\infty\,dx = I_k+II_k+III_k
\]
with
\begin{align*}
I_k&=\int_\Omega (v_k-v)H_\infty\,dx\\
II_k&=\int_\Omega (v_k-v)(H_k-H_\infty)\,dx\\
III_k&=\int_\Omega v(H_k-H_\infty)\,dx.
\end{align*}
By lower semicontinuity (as in \cite[Proposition 2.1]{Giusti1}), $\liminf_{k\rightarrow\infty}I_k\geq0$. Next, we bound
\[
|II_k|\leq \|v_k-v\|_{L^1(\Omega)}\left(\|H_k\|_{L^\infty(\Omega)}+\|H_\infty\|_{L^\infty(\Omega)}\right).
\] 
The convergence of $v_k$ to $v$ in $L^1(\Omega)$ and the uniform bound of $H_k$ in $L^\infty(\Omega)$ give $\lim_{k\rightarrow\infty}II_k=0$. 
Finally, $\lim_{k\rightarrow\infty}III_k=0$ by the weak-$\ast$ convergence of
$H_k$ to $H_\infty$ in $L^\infty(\Omega)$.
As for the limsup inequality $(b)$, given any $v\in\BV(\Omega)$, consider the constant sequence $v_k=v$ for all $k\geq1$ and notice that,
using the weak-$\ast$ convergence of $H_k$ to $H_\infty$ in $L^\infty(\Omega)$,
\[
\lim_{k\rightarrow\infty}\widetilde{\J}_k(v_k)=\widetilde{\J}_\infty(v).
\]
Hence, $\{\J_k\}_{k\geq1}$ converges to $\J_\infty$ in the $\Gamma$ sense.
Furthermore, the sequence $\{\J_k\}_{k\geq1}$ is equi-mildly coercive. Indeed,
by regularity estimates for the prescribed mean curvature equation, all the minimizers
of each $\J_k$, $k\geq1$, are in a ball in $C^{1,\alpha}(\overline{\Omega})$ of radius depending only on
$C_0$, $\Omega$, $g$ and $h$, and this ball is compact in $\BV(\Omega)$.

Consequently,
$u_\infty$ is a minimizer of  $\J_\infty$ and $(u_\infty,H_\infty)\in\mathcal{A}$.

Finally, by convexity,
\begin{align*}
\int_\Omega|\nabla H_k|^2\,dD(u_k)&\geq \int_\Omega|\nabla H_\infty|^2\,dD(u_k)\\
&\quad+2\int_\Omega\nabla H_\infty\cdot(\nabla H_k-\nabla H_\infty)\,dD(u_k).
\end{align*}
As $k\to\infty$, the left hand side converges to $m$. As for the right hand side, \eqref{eq.lscmeasures} implies that 
$$\liminf_{k\to\infty}\int_\Omega|\nabla H_\infty|^2\,dD(u_k)
\geq\int_\Omega|\nabla H_\infty|^2\,dD(u_\infty).$$
For the second term on the right hand side above, the regularity of the prescribed mean
curvature equation gives that $u_k\in\Lip(\overline{\Omega})$, with Lipschitz constant bounded by
some constant $c>0$ independent of $k$
(see \cite{Giaquinta}). Hence,
$$\bigg|\int_\Omega\nabla H_\infty\cdot(\nabla H_k-\nabla H_\infty)\,dD(u_k)\bigg|
\leq (1+c^2)^{1/2}\|\nabla H_{\infty}\|_{L^2(\Omega)}\|\nabla H_k-\nabla H_\infty\|_{L^2(\Omega)}.$$
In view of \eqref{eq.nablaH}, this term goes to 0 as $k\to\infty$. We have shown that $(u_\infty,H_\infty)$ is a minimizer. 

Since the first relation in \eqref{eq.conh} holds, by the results in \cite{Giaquinta} (for which $\partial\Omega\in C^2$ is in fact enough, see also \cite[Theorem 13.2]{Gilbarg-Trudinger} and the comment following it), we conclude that $u\in C^{1,\alpha}(\overline{\Omega})\cap C^{2,\alpha}_{\mathrm{loc}}(\Omega)$.
\end{proof}

\begin{rem}
We point out that the first condition in \eqref{eq.conh} is only used to deduce the boundary regularity of $u_\infty$,
and is not actually needed in the construction of the pair $(u_\infty,H_\infty)$.
\end{rem}

\medskip

\noindent\textbf{Acknowledgements.~}We would like to thank Irene Mart\'inez Gamba and the referee
for useful comments that helped improve the presentation of the paper.

\section*{Statements and declarations}

\noindent\textbf{Data Availability Statement.~}Data sharing not applicable to this article as no datasets were generated or analyzed during the current study.

\medskip

\noindent\textbf{Conflict of Interest Statement.~}The authors declare no conflict of interest.



\end{document}